

\documentclass[12pt,titlepage,a4paper]{article}

\usepackage{amssymb,latexsym,amsfonts,amsmath,amsthm}  


{\makeatletter \@addtoreset{equation}{section}}

\newcommand{\diag}{\mathop{\mathrm{diag}}\nolimits}

\newtheorem{theorem}{Theorem}[section]
\newtheorem{lemma}[theorem]{Lemma}



\newcommand{\wt}{\widetilde}
\newcommand{\bl}{\bigl(}
\newcommand{\br}{\bigl)}
\newcommand{\Bl}{\Bigl(}
\newcommand{\Br}{\Bigl)}
\newcommand{\bgl}{\biggl(}
\newcommand{\bgr}{\biggr)}

\newcommand{\noi}{\noindent}

\newcommand{\sn}{\smallskip\noindent}
\newcommand{\bsk}{\bigskip}


\begin{document}

\title{Minimal state space realizations in Jacobson normal form}
\author{
Naoharu Ito\\
Department of Mathematical Education\\
Nara University of Education\\
Takabatake-cho, Nara-shi\\
Nara 630-8528\\
Japan
         \and 
Wiland Schmale\\
Fachbereich 6 Mathematik\\
Carl-von-Ossietzky-Universit\"at\\
D-26111 Oldenburg\\
Germany\\ 
\phantom{iii}\\
%
Harald~K. Wimmer\\
Mathematisches Institut\\
Universit\"at W\"urzburg\\
D-97074 W\"urzburg\\
Germany}
\date{2001}


\maketitle

\begin{abstract}

We derive a procedure for a minimal state space
realization of a rational transfer matrix over an arbitrary
field. The procedure is based on the Smith-McMillan form
and leads to  a state transition matrix 
in Jacobson normal form. 

\vspace{2cm}
\noindent
{\bf Mathematical Subject Classifications (2000):} 
93B20, 
93B15, 
15A23, 
15A21 

\vspace{.5cm}
\noindent
{\bf Keywords:} minimal realizations, Jacobson normal form, 
Smith-McMillan form, resolvent, linear sequential circuits

\vspace{.5cm}
\noindent
{\bf Running title:} Minimal realizations

\vspace{5cm}
\noindent
{\bf Address for Correspondence:}\\ 
H. Wimmer\\
Mathemat. Institut \\
Universit\"at W\"urzburg\\
Am Hubland\\
D-97074 W\"urzburg\\
Germany

\vspace{.2cm}
\flushleft{
{\textsf e-mail:}
~~\texttt{\small wimmer@mathematik.uni-wuerzburg.de} }\\
{\textsf Fax:}~~
+49 931 888\,\,46\,11\\
\end{abstract}

\section{Introduction}

Realization theory provides tools and techniques 
for a wide range of
applications of mathematical systems theory. 
In particular, state space
realizations are used for systems identification 
(Kalman and Declaris 1970),
 linear sequential circuits (Gill 1966)
 and amplifier circuit
 synthesis (Newcomb 1967).
During the last decade 
realization techniques of algebraic
systems theory have been playing an increasing role 
in convolutional coding  (Rosenthal 2001).
 Various types of 
realizations serve as first order representations
for convolutional codes and are the
basis 
for the construction of new codes 
of Rosenthal and York (1999).

Overviews over the literature 
by De Schutter (2000) and Datta (1980)
 show  three distinct
approaches to construct   minimal realizations of a 
strictly proper rational transfer matrix $T(s)$. 
The starting point for the first approach developed  by
Ho and Kalman (1966), Silverman (1971),
Eising and Hautus (1981),
is the impulse response written as
\[
   \sum_{\nu = 1} ^\infty \, C_{\nu} s^{-\nu}  =  T(s)  .
\]
A block Hankel matrix containing the Markov 
parameters $C_{\nu}$ is then
transformed in such a way that it produces a triple 
$(F,G,H)$ of a  minimal
realization
\begin{equation}\label{eq1}
   H(sI-F)^{-1}G =  T(s).
\end{equation}
The algorithms of the 
second approach, e.g., of Mayne (1968), Rosenbrock (1970), 
Datta (1980),
take advantage
of the fact that, according to  Kailath (1980: Chapter 2),
it is fairly easy 
to write down a non-minimal controllable (or observable)
 realization by inspection.
A minimal realization is then
obtained by extracting the unobservable (or uncontrollable) parts.
 With the
exception of Datta (1980) 
the two methodes described above 
are not designed to give a 
matrix $F$ in \eqref{eq1} in a canonical form. In general, this can
only be achieved by a third class of approaches 
which employ factorizations
and transformations of the transfer matrix $T(s)$. Kalman's 
(1965)
pioneering
paper 
belongs to this group, and also  Pace and Barnett (1974),
 Montes (1976)  and Coppel (1981). 
 By transforming the 
partial fraction components of a complex (or
real) transfer matrix $T(s)$ into Smith-McMillan form and then
 using
Taylor expansions, Kalman produced a minimal realization
\eqref{eq1} with $F$ being in Jordan normal form (or in real Jordan
normal form). That procedure is restricted 
to algebraically (or real)
closed fields. 

In this paper we are dealing with transfer matrices over an 
arbitrary
field $K$. 
We will adapt Kalman's approach to obtain a realization
\eqref{eq1} where $F$ is in Jacobson normal form. 
The motivation for
 our
study comes from applications of systems over finite fields such as
 linear
sequential circuits (Gill 1966).
According to
Massey and Sain (1967), Forney (1970), Rosenthal et al. (1996),
and  Rosenthal (2001),
  convolutional codes can be interpreted as linear sequential
 circuits.
Therefore 
we have developed our realization with the prospect
 of new constructions of codes  in the spirit of Rosenthal and
York (1999).

\section{The Jacobson normal form}

Let us briefly recall how the Jacobson normal form extends the
 concept of
Jordan normal form. Throughout this paper $p\in K[s]$
will be a fixed monic irreducible polynomial,
\[
   p(s) = s^n  + a_{n-1} s^{n-1} +  \dots  + a_0. 
\]
Let
  \begin{equation}  \label{eq2}
     C=C(p)=\left(
    \arraycolsep3pt
     \begin{array}{cccc}
     0      & 1    &           & 0\\
     \vdots &      &  \ddots   &  \vdots \\
     0      & 0    &  \cdots   &  1\\
     -a_0   & -a_1 &  \cdots   & -a_{n-1}
     \end{array}
    \right)
 \end{equation}
be the companion matrix associated with $p$. In particular, if
$p=s-\lambda$, then $C(p)=(\lambda)_{1\times 1}$.
Define
 \begin{equation}\label{eq3}
    V=
    \left(
     \arraycolsep3pt
     \begin{array}{cccc}
     0      & 0    &  \cdots   & 0\\
     \vdots & \vdots     &     &  \vdots \\
     0      & 0    &  \cdots   &  0\\
     1      & 0    &  \cdots   & 0
     \end{array}
    \right)_{n\times n}
              = e_n e_1^T,
  \end{equation}
where $e_1 = (1,0\ldots, 0)^T$ and $e_n = (0,\ldots, 0,1)^T$are unit
vectors of $K^n$. We call
   \begin{equation} \label{eq4}
  J =  J(p^k) =
\left(
\begin{matrix}
    C  &    V    &        &             &      &    \\
       &   C   &    .     &           &        &      \\
       &      &    .     &   .        &       &      \\
       &      &         &    .      &   .       &      \\
       &      &         &           &   C    & V \\
       &      &         &           &        & C
       \end{matrix} 
      \right)_{nk\times nk}
   \end{equation}
a \emph{Jacobson block} corresponding to $p^k$. 
The Jordan block
\[ 
    J[(s- \lambda)^k] = 
                          {\left(
     \arraycolsep3pt
     \begin{array}{ccccc}
     \lambda & 1       &  0     & \cdots  & 0       \\
      0      & \lambda &  1     &         & \cdot   \\[-1ex]
      \vdots &         &        &  \ddots           \\[-1ex]
       \cdot &         &        &         & 1       \\
      0      & \cdot   & \cdot  & \cdots  & \lambda
     \end{array}
    \right)}_{k\times k}
   \]
is a special case of \eqref{eq4}.
The fact that the $(i,i+1)$-entries of $J$ are equal to 1
implies that $J$ is nonderogatory and that the
 Smith form of $sI-J$ is
$\diag (1,\dots 1,p^k)$. Hence, if $A\in K^{\ell \times \ell}$ 
has $p^k$ as its
only elementary divisor, then $A$ is similar (over $K$)
 to $J = J(p^k)$.
The following, more general result can be traced back to
 Krull's (1921) Ph.D. thesis.
More easily accessible references are the books of 
 Jacobson (1953),
Ayres (1962)   or Cohn (1974).
\begin{theorem} \label{theo.jac}
Let 
            $p_1,\dots , p_m$ 
be the distinct irreducible factors of the
characteristic polynomial of a
 matrix $A \in K^{\ell \times \ell}$ and 
let 
\begin{multline}
   p^{k_{11}}_1, \dots , p_1 ^{ k_{ 1  \tau _1}  } , \dots,
  p_m ^{k_{m1} }, \dots,     p_m ^{k_ {m \tau _m  } }, \\
 k_{11} \leq  \dots \leq 
k_{ 1  \tau _1} , \dots,  
   k_{m1} \leq \dots \leq  k_ {m \tau _m}  ,
\end{multline}
be  the corresponding elementary divisors. 
Then $A$ is similar to
  \begin{equation} \label{eq5}
   \diag \Bl J \bl p _1 ^{k_{11} } \br, \dots, J \bl p_m ^{ k_ {m \tau _m} }  
\br \Br \; .
  \end{equation}
\end{theorem}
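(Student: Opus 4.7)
The plan is to reduce the theorem to the case where $A$ has a single elementary divisor and then invoke the observation---already made in the paragraph preceding the theorem---that a matrix whose unique elementary divisor is $p^k$ is similar to $J(p^k)$.

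First I would appeal to the structure theorem for finitely generated modules over the principal ideal domain $K[s]$. Viewing $K^{\ell}$ as a $K[s]$-module with $s$ acting as $A$, this theorem yields a direct-sum decomposition
\[
   K^{\ell} \;\cong\; \bigoplus_{i,j} K[s]/\bigl(p_i^{k_{ij}}\bigr),
\]
whose summands are cyclic primary modules and whose annihilators are precisely the elementary divisors of $A$. A basis adapted to this decomposition produces a blockwise similarity $A \sim \diag(A_{11},\dots,A_{m\tau_m})$ in which $A_{ij}$ has $p_i^{k_{ij}}$ as its only elementary divisor. The quoted observation then furnishes similarities $A_{ij} \sim J(p_i^{k_{ij}})$, and concatenating them block by block yields the required similarity to \eqref{eq5}.

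The main obstacle is the Smith form claim underlying the quoted observation, namely that $sI-J(p^k)$ is equivalent to $\diag(1,\dots,1,p^k)$. The determinant is immediate from the block upper-triangular structure of $J(p^k)$: since $\det\bigl(sI-C(p)\bigr)=p$, one has $\det\bigl(sI-J(p^k)\bigr)=p^k$. To show that the remaining $nk-1$ invariant factors are trivial it suffices to exhibit a single $(nk-1)\times(nk-1)$ minor of value $\pm 1$. I would delete the last row and the first column of $sI-J(p^k)$: the resulting matrix has $(a,b)$-entry equal to the $(a,b+1)$-entry of $sI-J(p^k)$. Above the diagonal this entry vanishes, since $sI-J$ has no nonzero entries strictly above its own super-diagonal; on the diagonal it equals $-1$, coming either from the super-diagonal of a companion block $sI-C(p)$, or, when $a$ is a multiple of $n$, from the $(n,1)$-position of an off-diagonal block $-V$. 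Hence the minor equals $(-1)^{nk-1}$, the $(nk-1)$-th determinantal divisor is a unit, and the claimed Smith form follows. Together with the elementary-divisor interpretation of the Smith form, this completes the proof.
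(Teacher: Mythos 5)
Your argument is correct, but there is nothing in the paper to compare it against: the authors do not prove Theorem \ref{theo.jac} at all, attributing it to Krull's 1921 dissertation and referring to Jacobson, Ayres and Cohn for accessible proofs. What the paper does supply, in the paragraph just before the theorem, is the unproved assertion that the $(i,i+1)$-entries of $J(p^k)$ being $1$ forces the Smith form of $sI-J(p^k)$ to be $\diag(1,\dots,1,p^k)$, whence any matrix with single elementary divisor $p^k$ is similar to $J(p^k)$. Your proposal fills in exactly the two missing pieces: the reduction to one elementary divisor via the primary decomposition of $K^{\ell}$ as a $K[s]$-module, and the verification of the Smith form claim by exhibiting the $(nk-1)\times(nk-1)$ minor obtained by deleting the last row and first column. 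That minor computation is sound --- the resulting matrix is lower triangular with $-1$ on its diagonal precisely because $J$ has no nonzero entries strictly above its superdiagonal and all superdiagonal entries equal $1$, which is the very property the paper emphasizes --- and the determinant $\det\bigl(sI-J(p^k)\bigr)=p^k$ follows from the block triangular structure. The only ingredients you leave implicit are standard: the structure theorem for finitely generated torsion modules over the PID $K[s]$, and the fact that $A\sim B$ over $K$ iff $sI-A$ and $sI-B$ are equivalent over $K[s]$ (you need the latter to pass from ``same single elementary divisor'' to ``similar to $J(p_i^{k_{ij}})$''). Citing or stating those two facts explicitly would make the write-up complete; as it stands the proof is a correct and reasonably economical substitute for the literature references the paper relies on.
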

The matrix \eqref{eq5} is called the \emph{Jacobson normal form} of $A$.

\section{Notation}

Let $K(s)$ be the field of rational functions over $K$. An element $f\in
K(s)$ is called {\em strictly proper} if $f= 0$ or 
          $f=g/h, \,\, g,h \in K[s], \,\, gh \neq 0$ and 
$\deg g < \deg h$.
Let $K_{sp}(s)$ be the
$K$-vector space of strictly proper rational 
functions over $K$. Then each
$f \in K(s)$ can be decomposed uniquely as
\[
  f = w+y
\]
such that $w\in
K_{sp}(s)$ and $y\in K[s]$. If we set $\pi_{-}f = w$, then $\pi_{-}$
is the
projection of $K(s)$ onto $K_{sp}(s)$. In a natural way these
definitions
extend elementwise
 to vectors and matrices of rational functions. For a 
nonzero polynomial
vector $h= (h_1,\dots h_r)^T\in K^r[s]$ 
we define
\[
  \deg h = \max \{ \deg h_i,\; h_i\ne 0, \;i = 1,\dots r\}\; .
\]
 We set $\deg h=-\infty$ if $h=0$.

Let $I_k$ denote the $k\times k$ identity matrix and 
define 
\[
  N_k =
        {\left(
    \arraycolsep3pt
     \begin{array}{ccccc}
         0   & 1       &  0     & \cdots   & 0      \\
      \cdot  & 0       &  1     &          &  \cdot \\[-1ex]
      \vdots &         &        &  \ddots           \\[-1ex]
       \cdot &         &        &          & 1      \\
        0    &  \cdot  & \cdot  & \cdots   & 0
     \end{array}
    \right)}_{k\times k} .
\] 

According to Horn and Johnson (1991: Chapter 4)
 the Kronecker product  
 of two matrices $A=(a_{ij})$ and $B$ is the block matrix 
\[
  A\otimes B = (a_{ij}\, B).
\]  
 Note that the Jacobson block \eqref{eq4} can be written
as 
   $J = I_k  \otimes C + N_k  \otimes V$.
If the products $AC$ and $BD$ exist then
  \begin{equation} \label{eq6} 
    (A \otimes B)(C \otimes D) = (AC \otimes BD)\; .
  \end{equation}

\section{A special case}

In this section we shall focus on a particular type of 
transfer matrices.
The general realization problem will then be reduced to that 
special case.
Let $W\in K^{q\times t}(s)$, $W\ne 0$, 
be of rank $1$,
  \begin{equation}\label{eq7}
   W = h \frac{1}{p^k} g^T
 \end{equation}
where $h\in K^q[s]$, $g\in K^t[s]$ are
polynomial vectors.
We want to construct a realization of $\pi_{-}W$. In addition to the
companion matrix $C$ associated with
$
  p(s) =
 s^n + a_{n-1}s^{n-1} + \dots +  a_0 
$
we shall need the matrix
\begin{equation}  \label{eqM}
  M = M(p) =
  \left(
   \arraycolsep3pt
  \begin{array}{ccccccc}
   a_1     & a_2   & \cdot & \cdot & \cdot & a_{n-1} & 1\\[1ex]
   a_2     & a_3   & \cdot & \cdot & \cdot &1        & 0 \\[0ex]
\cdot   & \cdot &       &       & \cdot & \cdot   & \cdot \\[-1ex]
 \cdot   & \cdot &       & \cdot &       & \cdot   & \cdot \\[-1ex]
   \cdot   & \cdot & \cdot &       &       & \cdot   & \cdot \\
   a_{n-1} & 1     & \cdot & \cdot & \cdot & 0       & 0  \\[1ex]
   1       & 0     & \cdot & \cdot & \cdot & 0       & 0
  \end{array}
  \right) \,.
\end{equation}
Note that $M$ satisfies 
  \begin{equation}\label{eq8}
   MC = C^T M\; .
  \end{equation}
Let $h$ have the $p$-adic expansion
  \begin{equation}  \label{eq9}
   h = h_0 + h_1 p +\dots + h_{k-1} p^{k-1}+ \dots
  \end{equation}
where
  \begin{equation} \label{eq10}
   h_i \in  K^q[s], \, \deg h_i < n = \deg p ,\; i \geq 0 \;. 
  \end{equation}
We define $H_i\in K^{q\times n}$, 
                                   $ i \geq 0 $,   by
   \begin{equation}\label{eq11}
    h_i(s) = H_i \begin{pmatrix} 1\\s\\ \vdots\\ s^{n-1}\end{pmatrix}\; .
   \end{equation}
Using the expansion
   \begin{equation}    \label{eq12}
    g = g_0 + g_1 p + \dots + g_{k - 1} p^{k-1} + \dots
   \end{equation}
with
   \begin{equation}\label{eq13}
   g_i\in K^t[s], \, \deg g_i<n, \; i \geq  0 \;,  
   \end{equation}
we define matrices $G_i\in K^{t\times n}$ by
   \begin{equation}\label{eq14}
     g_i(s) = G_iM
    \begin{pmatrix} 1\\s\\ \vdots\\ s^{n-1}\end{pmatrix}\; .
   \end{equation}

\begin{theorem}  \label{theo4.1}
Let $h\in K^q[s]$ and $g\in K^t[s]$ be given. Assume that
\begin{equation}  \label{eq14b}
   W = h\frac{1}{p^k}\, g^T
\end{equation}
is a coprime factorization. Let $H_i$ and $G_i$, $i=0,1,\dots, k-1$, be
defined by \eqref{eq9} -- \eqref{eq11} and \eqref{eq12} -- \eqref{eq14}.
Set
\[
  H = (H_0,\ldots, H_{k-1})
\]
and
\[
  G = \begin{pmatrix}
      G^T_{k-1}\\ \vdots\\[0.5ex] G^T_0
      \end{pmatrix} \; .
\]
Then
  \begin{equation} \label{eq15}
  \pi_{-} W = H \Bl sI-J\bl p^k\br\Br^{-1} G\; ,
  \end{equation}
and the realization in \eqref{eq15} is minimal.
\end{theorem}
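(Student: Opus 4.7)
The plan is to prove the theorem in two stages: first verify the algebraic identity $H(sI-J(p^k))^{-1}G = \pi_{-}W$, and then establish minimality using the non-derogatory structure of $J(p^k)$ together with the coprime-factorization hypothesis.

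For the resolvent I would factor $sI - J = (I_k \otimes (sI-C))(I - N_k \otimes RV)$, where $R = (sI-C)^{-1}$, and use the nilpotency $N_k^k = 0$ together with \eqref{eq6} to obtain
\[
  (sI-J)^{-1} = \sum_{j=0}^{k-1} N_k^j \otimes R(VR)^j.
\]
Two facts about the companion matrix then simplify the blocks. A direct calculation gives $Re_n = v/p$, where $v = (1, s, \ldots, s^{n-1})^T$; and \eqref{eq8}, in the equivalent form $MR = R^T M$, combined with $e_n^T R = v^T/p$, yields $e_1^T R = v^T M/p$. Since $V = e_n e_1^T$, these two formulas give $R(VR)^j = v v^T M/p^{j+1}$ for all $j \geq 1$.

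Next I would expand $H(sI-J)^{-1}G$ using the block decomposition of $H$ and $G$ together with $H_i v = h_i$ from \eqref{eq11} and $v^T M G_\ell^T = g_\ell^T$ from \eqref{eq14} (using $M^T = M$), and compare the result with the $p$-adic expansion of $\pi_{-}(hg^T/p^k)$ produced from \eqref{eq9} and \eqref{eq12}. The two sides split cleanly: the contributions with denominator $p^\mu$ for $\mu \geq 2$ agree term by term, because each $h_i g_\ell^T/p^\mu$ is already strictly proper (entries have degree at most $2n-2 < \mu n$), while for $\mu = 1$ one must show $H_i R G_\ell^T = \pi_{-}(h_i g_\ell^T/p)$. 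This reduces to the single identity
\[
  \pi_{-}(v v^T M / p) = (sI-C)^{-1},
\]
equivalently $v v^T M \equiv \mathrm{adj}(sI-C) \pmod{p}$.

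This is the main obstacle. I would verify it by passing to the residue field $\mathbb{F} = K[s]/(p)$: writing $\theta$ for the image of $s$, the relation $Cv(s) = sv(s) - p(s)e_n$ shows $v(\theta)$ is a right eigenvector of $C$ with eigenvalue $\theta$, and \eqref{eq8} then shows that $Mv(\theta)$ is a left eigenvector. Irreducibility of $p$ makes $\theta$ a simple eigenvalue of $C$ over $\mathbb{F}$, so $\mathrm{adj}(\theta I - C)$ has rank one and factors over $\mathbb{F}$ as $c\,v(\theta)v(\theta)^T M$; since the $(1,n)$-entry equals $1$ on both sides, $c = 1$. For minimality, the stated Smith form $\mathrm{diag}(1,\ldots,1,p^k)$ makes $J$ non-derogatory with minimal polynomial $p^k$, so the lattice of $J$-invariant subspaces is a chain $0 \subset F_1 \subset \cdots \subset F_k = K^{nk}$ with $F_j = \ker p^j(J)$; the block structure of $J$ shows that $F_j$ consists exactly of vectors supported in the first $j$ blocks, with $J|_{F_j} \cong J(p^j)$. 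Hence $(H,J)$ is observable iff $F_1 \not\subseteq \ker H$, iff $H_0 \neq 0$, iff $h_0 \neq 0$; dually, $(J,G)$ is controllable iff $g_0 \neq 0$. The coprimeness of $W = hp^{-k}g^T$ is precisely $p \nmid h$ and $p \nmid g$, equivalently $h_0, g_0 \neq 0$, so the realization, of dimension $nk = \deg p^k$, is minimal.
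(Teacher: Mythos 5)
Your argument is correct and reaches the same two pivot points as the paper --- the block formula for $(sI-J(p^k))^{-1}$ in terms of $R=(sI-C)^{-1}$, and the identity $\pi_{-}\bl p^{-1}bb^{T}M\br=(sI-C)^{-1}$, which is \eqref{eq16} of Lemma \ref{lemma4.2} (your $v$ is the paper's $b$) --- but it establishes both of them, and minimality, by different means. For the resolvent you factor $sI-J=(I_k\otimes(sI-C))(I-N_k\otimes RV)$ and expand the nilpotent part, where the paper writes down the block upper-triangular inverse directly; this is only a cosmetic difference. For the key identity the paper uses the expansion $s^{j}(sI-C)^{-1}=(s^{j-1}I+\dots+C^{j-1})+C^{j}(s^{-1}I+s^{-2}C+\dots)$ to get \eqref{eq20} and then telescopes in \eqref{eq21}; you instead reduce modulo $p$ and identify $bb^{T}M$ with $\mathrm{adj}(sI-C)$ by a rank-one eigenvector argument in $K[s]/(p)$. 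The one place where your wording needs care is the claim that irreducibility of $p$ makes $\theta$ a \emph{simple} eigenvalue of $C$ over $K[s]/(p)$: the paper works over an arbitrary field, and over an imperfect field an irreducible $p$ can be inseparable, so $\theta$ may have algebraic multiplicity greater than one. What saves your argument is that a companion matrix is nonderogatory, so $\mathrm{rank}(\theta I-C)=n-1$ in any case and $\mathrm{adj}(\theta I-C)$ still has rank one, which is all your factorization needs; you should justify it that way rather than via simplicity. For minimality the paper simply invokes Coppel's degree formula (Lemma \ref{lemma4.3}), which gives the dimension $nk$ directly from the coprime factorization; your observability/controllability analysis via the chain of invariant subspaces of the nonderogatory $J(p^k)$ is self-contained and makes visible exactly where coprimeness ($h_0\neq 0$ and $g_0\neq 0$) enters, at the cost of re-proving a standard fact. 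Both routes are sound; the paper's is shorter because it leans on the cited lemma, yours is more elementary and internal to the structure of $J(p^k)$.
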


Let us briefly describe how in the case of 
$p = s- \lambda$ the realization \eqref{eq15}
reduces to  the realization of Kalman 
(1965: 532-533).
Consider \eqref{eq7} with 
\[ 
  W = h \, \frac{1}{(s- \lambda)^k} \, g^T
\]
and 
\[ h = \sum _{i \geq 0} h_i(s- \lambda)^i, \,\,  h_i \in
  K^q, \quad \mathrm{and} \quad  
       g=  \sum _{i \geq 0} g_i(s- \lambda)^i, \,\, 
g_i \in   K^t.  
\]
   Because of 
    $\mathrm{deg}\,p = 1$ the matrix $M$ in
 \eqref{eqM} reduces to $M = I_1$. Furthermore,
in \eqref{eq11} and \eqref{eq14}
 we have 
   $h_i(s) = h_i$ and $g_i(s) = g_i$. Therefore
 \eqref{eq15} yields  
\[
\pi _{-}  h \frac{1}{(s- \lambda)^k}g^T     =
    (h_0, \dots , h_{k-1}) \Bigl( (s- \lambda) I_k -
     N_k \Bigr)^{-1} 
  \begin{pmatrix}
      g^T_{k-1}\\ \vdots\\[0.5ex] g^T_0
      \end{pmatrix} \; .
\]

The proof of  Theorem \ref{theo4.1}
 is based on two lemmas.

\begin{lemma}   \label{lemma4.2}
Let the polynomial vector $b\in K^n[s]$ be defined as
\[
 b(s) = (1,s,\ldots, s^{n-1})^T \; .
\]
Let $C = C(p)$ be the companion matrix for  the polynomial $p$
and let $M = M(p)$ be given by \eqref{eqM}.
Then
  \begin{equation}\label{eq16}
  (sI-C)^{-1} = \pi_{-}p^{-1} b b^T M
  \end{equation}
and
  \begin{equation}\label{eq17}
   \bl sI-J(p^k)\br^{-1} =
 \pi_{-}\bl p I_k-N_k\br^{-1}\otimes\, b b^T
    M\; .
  \end{equation}
\end{lemma}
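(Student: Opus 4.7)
The plan is to establish \eqref{eq16} directly from the structure of the companion matrix, and then bootstrap to \eqref{eq17} via a Neumann series that terminates because $N_k$ is nilpotent.

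For \eqref{eq16} two companion-matrix identities drive everything: $(sI-C)b(s) = p(s)\,e_n$, read off from \eqref{eq2}, and $e_n^T M = e_1^T$, read off from the bottom row of $M$ in \eqref{eqM}. Combined with $MC = C^T M$ from \eqref{eq8}, transposition gives
\[
  b^T M = p\, e_n^T (sI-C^T)^{-1} M = p\, e_n^T M (sI-C)^{-1} = p\, e_1^T (sI-C)^{-1}.
\]
Left-multiplying by $b$ shows that \eqref{eq16} is equivalent to the claim $\pi_{-}\bl b(s)\, e_1^T(sI-C)^{-1}\br = (sI-C)^{-1}$, i.e.\ that $\pi_{-}$ applied to $b(s)$ times the first row of the resolvent recovers every row. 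I would close this by using $e_i^T = e_1^T C^{i-1}$ for $i \leq n$ (since $C^T$ shifts $e_1\mapsto e_2\mapsto\dots\mapsto e_n$), together with
\[
  C^{i-1}(sI-C)^{-1} = s^{i-1}(sI-C)^{-1} - \sum_{\nu=1}^{i-1} s^{i-1-\nu} C^{\nu-1},
\]
so that row $i$ of $(sI-C)^{-1}$ equals $\pi_{-}\bl s^{i-1} e_1^T(sI-C)^{-1}\br$, the polynomial correction being killed by $\pi_{-}$.

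For \eqref{eq17} I would exploit $J(p^k) = I_k \otimes C + N_k \otimes V$ and, with $A := sI-C$, factor
\[
  sI - J(p^k) = \bl I_k \otimes A\br\bl I - N_k \otimes A^{-1}V\br.
\]
Because $N_k^k = 0$, the geometric series terminates, and \eqref{eq6} yields
\[
  \bl sI - J(p^k)\br^{-1} = \sum_{m=0}^{k-1} N_k^m \otimes \bl A^{-1}V\br^m A^{-1}.
\]
With $V = e_n e_1^T$, $A^{-1} e_n = b/p$, and $e_1^T b = 1$, the identity $(A^{-1}V)^2 = p^{-1} A^{-1}V$ drops out, and induction combined with $e_1^T A^{-1} = p^{-1} b^T M$ (the by-product of Part 1) gives $\bl A^{-1}V\br^m A^{-1} = p^{-(m+1)} bb^T M$ for $m \geq 1$. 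A degree count (with $\deg bb^T M \leq 2n-2 < (m+1)n$) shows these summands are already strictly proper, while the $m=0$ summand is precisely $(sI-C)^{-1} = \pi_{-}(p^{-1} bb^T M)$ by \eqref{eq16}. Pulling $\pi_{-}$ out and recognizing $\sum_{m=0}^{k-1} N_k^m/p^{m+1}$ as the terminating Neumann expansion of $\bl pI_k - N_k\br^{-1}$ delivers \eqref{eq17}.

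The main obstacle I expect is handling $\pi_{-}$ with care: it is not multiplicative, so it cannot be freely pushed across products. What makes the final reassembly legitimate is that $N_k^m$ has scalar (constant) entries and that every summand with $m \geq 1$ is already strictly proper, so the only place where $\pi_{-}$ acts non-trivially is the $m=0$ summand, which is handled by Part 1. The algebraic pivot is the idempotent-like relation $(A^{-1}V)^2 = p^{-1}A^{-1}V$, which itself rests on the single entry $e_1^T(sI-C)^{-1}e_n = 1/p$ coming from \eqref{eq16}; this is the concrete link between the two parts of the lemma.
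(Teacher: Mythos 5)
Your proposal is correct and follows essentially the same route as the paper: the same identities $(sI-C)b=p\,e_n$, $e_n^TM=e_1^T$, $MC=C^TM$ give $e_1^T(sI-C)^{-1}=p^{-1}b^TM$, the same $\pi_-s^{j}(sI-C)^{-1}=C^{j}(sI-C)^{-1}$ trick recovers all rows for \eqref{eq16}, and for \eqref{eq17} your reduction $(A^{-1}V)^mA^{-1}=p^{-(m+1)}bb^TM$ via $e_1^TA^{-1}e_n=p^{-1}$ is exactly the paper's computation of $(sI-C)^{-1}Q^{i-1}$. The only cosmetic difference is that you derive the block upper-triangular form of $(sI-J)^{-1}$ from a terminating Neumann series, whereas the paper simply displays it; your explicit degree count justifying that the $m\ge 1$ summands are already strictly proper is a welcome touch the paper leaves implicit.
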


\begin{proof} 
   Obviously  \, $(sI - C)b = p\, e_n$\, is equivalent to
  \begin{equation}\label{eq18}
  (sI-C)^{-1} e_n =  p^{-1} b.
  \end{equation}
From  \eqref{eq18} 
and \eqref{eq8} we obtain
  \begin{equation}\label{eq19}
   e^T_1 (sI-C)^{-1} = e^T_n M(sI-C)^{-1}  e_n^T(sI-C^T)^{-1} M = p^{-1}
                       b^TM\; .
  \end{equation}
It is easy to see that 
\[
s^j(sI-C)^{-1} = (s^{j-1}I + \cdots + C^{j-1}) +
       C^j(s^{-1}I + s^{-2}C + \cdots )
\]
implies 
  \begin{equation}\label{eq20}
   \pi_{-}s^j (sI-C)^{-1} = C^j (sI-C)^{-1}\; .
  \end{equation}
Therefore,
  \begin{multline} \label{eq21}
   \pi_{-}p^{-1} bb^TM   = \pi_{-} be^T_1 (sI-C)^{-1}= \\[2ex]
                         = \pi_{-} \sum^n_{\nu=1} s^{\nu-1}e_\nu e^T_1
                            (sI-C)^{-1} = \sum^n_{\nu=1} e_\nu e^T_1 C^{\nu-1}
                             (sI-C)^{-1} = \\[2ex]
           = \bgl \sum^n_{\nu =1} e_\nu e^T_\nu \bgr
                              (sI-C)^{-1} = (sI-C)^{-1}\; .\hspace*{3.4cm}
  \end{multline}
To verify \eqref{eq17} we note that
   \begin{equation} \label{eq22}
      \bl pI_k - N_k\br^{-1} =
        \left(
    \arraycolsep3pt
     \begin{array}{cccc}
     p^{-1} &  p^{-2} & \cdots &   p^{-k}\\
     0  & p^{-1} &\cdots  &   p^{-(k-1)} \\[1ex]
     \vdots  & \vdots & \ddots   & \vdots \\[1ex]
     0 & 0  & \cdots  & p^{-1}
     \end{array}
    \right) \;.
 \end{equation}
Put $Q=V(sI-C)^{-1}$ where $V$ is given by \eqref{eq3}. 
Then
\[
(sI-J)^{-1} =
        \left(
    \arraycolsep3pt
     \begin{array}{cccc}
    (sI-C)^{-1}   &  (sI-C)^{-1}Q   & \cdots &  
                                   (sI-C)^{-1}Q^{k-1} \\
     O  & (sI-C)^{-1}  &\cdots  &  (sI-C)^{-1}Q^{k-2}  \\[1ex]
     \vdots  & \vdots & \ddots   & \vdots \\[1ex]
     O & O  & \cdots  &  (sI-C)^{-1}
     \end{array}
    \right) \, .
 \]
Now \eqref{eq18},  \eqref{eq19} and
\[
   e^T_1 (sI-C)^{-1} e_n= p^{-1}
\]
imply 
\begin{multline*}
 (sI-C)^{-1}Q^{i-1}   =
 (sI-C)^{-1} \big[ e_n e^T_1 (sI-C)^{-1} \big] ^{i-1}
     = \\ 
\shoveleft    
    = (sI-C)^{-1} e_n \,\,  \big[ e^T_1 (sI-C)^{-1} e_n \cdots
     e^T_1 (sI-C)^{-1} e_n   \big]\,\,  e^T_1 (sI-C)^{-1} = 
 \\ 
 \shoveleft = 
      (sI-C)^{-1} e_n \,\,[ p^{-i+2} ]\,\,  e^T_1 (sI-C)^{-1} =
      p^{-1} b \,\,   p^{-i+2} \,\,  p^{-1} b^T M = \\
 = p^{-i}bb^TM = \pi_{-}\,p^{-i} bb^TM,\; i= 2,\ldots, k\; .
\end{multline*}
Hence, \eqref{eq16} and \eqref{eq22} 
 together with the definition of the Kronecker product
 yield
  \begin{align*}
   (sI-J)^{-1}  & = \pi_{-}
       \left(
    \arraycolsep3pt
     \begin{array}{cccc}
     p^{-1}bb^T M  &   \cdots &   p^{-k}bb^T M\\
       \vdots & \ddots  & \vdots \\[1ex] 
    0  & \cdots  &   p^{-1}bb^T M \\[1ex]
     \end{array}
    \right) 
= \\[3ex]
  & = \pi_{-} \bl (p I_k - N_k)^{-1} \otimes bb^T M\br \, .
  \end{align*}

\end{proof}

For the following well known result 
on the dimension of  minimal  realizations we refer to 
Coppel (1974).
\begin{lemma}\label{lemma4.3}
Let $P,S,A$ be polynomial matrices such that
\[
  R= PA^{-1}S
\]
is a coprime factorization. Then the dimension of a minimal
realization of $\pi_{-} R$ is equal to the degree of 
$\det A$.
\end{lemma}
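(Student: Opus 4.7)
The plan is to reduce the problem to the Smith–McMillan form of $R$ and then invoke the classical equality between the McMillan degree of a strictly proper rational matrix and the dimension of its minimal realizations.

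First, I would bring $A$ to Smith normal form, writing $A = U\,D\,V$ with $U,V$ unimodular polynomial matrices and $D = \diag(d_1,\dots,d_r)$, where $d_1 \mid \cdots \mid d_r$ are the invariant factors of $A$. Then
\[
  R = P A^{-1} S = (PV^{-1}) D^{-1} (U^{-1} S) =: \tilde P \, D^{-1} \, \tilde S,
\]
and $\deg \det A = \deg \det D = \sum_{i=1}^r \deg d_i$, because $\det U$ and $\det V$ are nonzero scalars. Since $U$ and $V$ are unimodular, the hypothesis that $P A^{-1} S$ is a coprime factorization carries over: $\tilde P$ and $D$ remain left coprime, and $D$ and $\tilde S$ remain right coprime.

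Second, with $D$ diagonal the coprimeness conditions become concrete: left coprimeness of $(\tilde P, D)$ forces, for every $i$, the polynomial $d_i$ to be coprime to the entries of the $i$-th column of $\tilde P$, and right coprimeness of $(D, \tilde S)$ forces $d_i$ to be coprime to the entries of the $i$-th row of $\tilde S$. Computing the Smith–McMillan form of $R = \tilde P D^{-1} \tilde S$ from this diagonal factorization, these coprimeness conditions prevent any cancellation between the invariant denominators $d_i$ and the corresponding rows and columns of $\tilde S$ and $\tilde P$. Consequently the denominators in the Smith–McMillan form of $R$ are exactly $d_1,\dots,d_r$.

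Third, the McMillan degree of $\pi_{-}R$ is by definition the sum of the degrees of these Smith–McMillan denominators, so it equals $\sum_{i=1}^r \deg d_i = \deg \det A$. The proof then concludes by appealing to the fundamental realization-theoretic theorem (Kalman, Ho–Silverman), which asserts that every strictly proper rational matrix admits a minimal state space realization whose dimension coincides with its McMillan degree.

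The main obstacle is the middle step: checking rigorously that two-sided coprimeness rules out \emph{every} possible cancellation in $\tilde P D^{-1} \tilde S$. Without coprimeness on both sides, a factor of some $d_i$ could cancel against a matching factor in the $i$-th row of $\tilde S$ or the $i$-th column of $\tilde P$, lowering the McMillan degree below $\deg \det A$; the irreducibility hypothesis is precisely what blocks such cancellations simultaneously from both sides. For a clean treatment of this bookkeeping (and of the Kalman--Ho--Silverman equality invoked at the end) the authors point to Coppel (1974).
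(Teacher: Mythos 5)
The paper itself offers no proof of Lemma \ref{lemma4.3}: it is presented as a ``well known result'' with a bare citation to Coppel (1974), so your sketch has to be judged on its own merits rather than against an argument in the text. Your overall route --- pass to the Smith form $A=UDV$, transport the coprimeness hypotheses through the unimodular factors, identify the Smith--McMillan denominators of $R$ with the invariant factors $d_1\mid\cdots\mid d_r$ of $A$, and finish with the classical equality of McMillan degree and minimal realization dimension --- is the standard one, and your first and third steps are sound.

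The genuine gap is your second step. Coprimeness of the matrix pair $(\tilde P, D)$ is the condition that $\bigl(\begin{smallmatrix} D \\ \tilde P\end{smallmatrix}\bigr)(s_0)$ have full column rank at every zero $s_0$ of $\det D$; because $d_1\mid\cdots\mid d_r$, such an $s_0$ is typically a common zero of several $d_i$, and the condition then couples all the corresponding columns of $\tilde P$ simultaneously. It is \emph{not} equivalent to ``$d_i$ is coprime to the entries of the $i$-th column of $\tilde P$'': for $D=\diag(d,d)$ and $\tilde P=\bigl(\begin{smallmatrix}1&1\\ 1&1\end{smallmatrix}\bigr)$ every entry is coprime to $d$, yet the pair is not coprime, and indeed $\tilde P D^{-1}$ has McMillan degree $\deg d$ rather than $2\deg d$. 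So your ``concrete'' reformulation is strictly weaker than the hypothesis and cannot by itself ``prevent any cancellation.'' Moreover, even with the correct rank conditions in hand, the assertion that the Smith--McMillan denominators of $\tilde P D^{-1}\tilde S$ are exactly $d_1,\dots,d_r$ is Rosenbrock's least-order theorem --- essentially a restatement of the lemma being proved --- and you acknowledge deferring precisely this point to Coppel (1974). In effect your proposal reduces the lemma to an equivalent known result and cites the same reference the authors do; that is a legitimate thing to do, but the intermediate claims as written are not a proof, and one of them is false as stated.
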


\noi {\bf{Proof of Theorem 
                           4.1:} }               \\
From \eqref{eq9} and \eqref{eq12} follows
\[
\pi_{-}W = \pi_{-}    
  (h_0,\ldots, h_{k-1})
        \left(
    \arraycolsep3pt
     \begin{array}{cccc}
     p^{-1} &  p^{-2} & \cdots &   p^{-k}\\
     0  & p^{-1} &\cdots  &   p^{-(k-1)} \\[1ex]
     \vdots  & \vdots & \ddots   & \vdots \\[1ex]
     0 & 0  & \cdots  & p^{-1}
     \end{array}
    \right) 
  \,
  \begin{pmatrix}
    g^T_{k-1}\\ \vdots\\[1ex] g^T_0
  \end{pmatrix} \; .
\]
Hence the relations
\[
  (h_0,\ldots, h_{k-1}) = (H_0,\ldots, H_{k-1}) (I_k\otimes b)
\]
and
\[
  (g_{k-1},\ldots, g_0) = (G_{k-1},\ldots, G_0) (I_k \otimes Mb)\;,
\]
the identity \eqref{eq22}, and the product formula \eqref{eq6}
imply
  \begin{align*}
    \pi_{-}W  &  = 
 \pi_{-} \, H(I \otimes b) (pI - N )^{-1} (I \otimes b^T
                     M) G = \\[1ex]
              &  = H \pi_{-} \Big[  \bl p I -N \br ^{-1}
                     \otimes\, bb^T M   \Big] G \; .
 \end{align*}
Now \eqref{eq15} follows immediately form \eqref{eq17}.
According to Lemma \ref{lemma4.3} 
the realization \eqref{eq15} is minimal.
\hfill $\square$

\section{Reduction to realizations with a single Jacobson block}

Let $T\in K^{q\times t}(s)$, $T \ne 0$, be a 
strictly proper rational
matrix, let 
 $d \in K[s]$ be the monic least common denominator of all 
elements of $T$,
  and let $p_1,\ldots, p_m$ be monic irreducible polynomials 
such that
$p^{\ell_1}_1\ldots p^{\ell_m}_m$ is a prime factorization of $d$.
 To build a realization of $T$ based on Theorem \ref{theo4.1} we
carry out  two steps.
First we 
take a partial fraction decomposition of each entry of $T$.
Then we decompose  $T$ accordingly as
   \begin{equation}\label{eq23}
     T = \sum^m_{\mu = 1} T_{p_\mu}
   \end{equation}
such that 
 each component $T_{p_\mu}$ is strictly proper having only powers of
$p_\mu$ as denominators of its entries. If
   \begin{equation}\label{eq24}
H_\mu(sI-F_\mu)^{-1} G_\mu =  T_{p_\mu}(s),
    \; \mu = 1,\ldots, m\; ,
   \end{equation}
are minimal realizations and if we set
   \begin{equation*}
     F = \diag (F_1,\ldots, F_m), \; G = \begin{pmatrix} G_1\\\vdots\\G_m
         \end{pmatrix}, \; H = (H_1,\ldots, H_m)\; ,
   \end{equation*}
then
   \begin{equation}\label{eq25}
 H(sI-F)^{-1}G  = T(s) 
   \end{equation}
is a minimal realization. We call \eqref{eq25} the 
\emph{direct sum} of the
realizations \eqref{eq24}. 

At this point
 we may restrict ourselves to
a strictly proper rational matrix $T$ where the least common 
denominator of its entries is a power of an irreducible polynomial
$p$.
In the second reduction step we want to decompose 
such a matrix $T$ into
a sum of rank 1 matrices. Assume rank 
                                           $T=r$ and let
\[
   \Sigma = \begin{pmatrix} D & 0\\ 0 & 0 \end{pmatrix}
\]
be the Smith-McMillan form of $T$ with
\[
   D = \diag 
  \Bl \frac{a_1}{p^{k_1}}, \ldots, \frac{a_r}{p^{k_r}}\Br\; ,
        k_1 \ge \dots \ge k_r \ge  0 \;,
\]
 and 
             \[
a_i\in K[s], \,\, \mathrm{gcd}(p,a_i) = 1, \,\, 
                    i =
                            1,\ldots,r, \,\,
         a_1 | \dots | a_r \;. 
\]
\smallskip
Let 
       $U = (u_1,\ldots, u_q)\in K^{q\times q}[s]$ 
 and 
       $V=
               (v_1,\ldots , v_t) \in K^{t\times t}[s]$ 
be unimodular matrices such that
\begin{equation}  \label{eqSmth}
  T = U \Sigma V^T \; .
\end{equation}
It follows from Lemma \ref{lemma4.3} that a 
minimal realization of $T$ has dimension equal to
 $
     \sum _{i=1} ^r n k_i 
 $. 
Now let  $\wt{a}_i\in K[s]$ be such that
\[
  \pi_{-} \bigl( a_i p^{-k_i} \bigr) = \wt{a}_i  p^{-k_i} \; ,
\]
and define
\begin{equation} \label{eq26} 
           w_i = \bl u_i \wt{a}_i \br\, \frac{1}{p^{k_i}} \,
v^T_i,
 \; i= 1, \ldots, r \;.
\end{equation}
Then \eqref{eqSmth} and $T = \pi _{-} T$ imply
   \begin{equation}   \label{eq27}
     T = \sum ^r_{i=1} \pi_{-} w_i\; .
   \end{equation}
Clearly,  $ \pi_{-}  w_i = 0$ if $k_i = 0$. If $k_i > 0$
then \eqref{eq26} is a  coprime factorization
since $u_i$ and $v_i$ are
columns of unimodular matrices and 
$\mathrm{gcd}(p, \tilde{a}_i) = 1$.
In that case $n k_i$ is the dimension of a
minimal realization of $\pi_{-}  w_i$. 
Therefore a 
direct sum of minimal realizations of the matrices 
$\pi_{-}w_i$ yields a minimal
realization of $T$. We remark that \eqref{eq27} puts us in the
position to apply Theorem~\ref{theo4.1}.
\hfill $\square$

It has been pointed out by Gill (1966) that 
the resolvent
\begin{equation}  \label{eqRes}
   T(s) = 
              (sI - A)^{-1}
\end{equation}
of a matrix $A \in K^{\ell \times \ell}$ is a special case
of transfer matrix.
Thus our realization
algorithm applied to \eqref{eqRes} yields $ (sI - A)^{-1} =
H\,(sI - F)^{-1}\,G$ and $G = H^{-1}$. Hence $A =H\,F\, H^{-1}$,
and $H$ transforms $A$ into Jacobson normal form. A different
approach to derive the Jacobson normal form from the resolvent
is due to  Della Dora and Jung (1996).
%
\section{An example}

\parindent0pt
In the following example the underlying field is 
$K = \mathbb{Z}_5$.
We consider the transfer matrix 
\begin{multline}  \label{eqT}
T(s) = \\
 \left(
{\begin{array}{cc}
\dfrac{2\,s^{6} + 3\,s^{3} + 2\,s^{2} + s + 4}
{(s^{2} + s + 2)^{2}\, (s^{3} + 3\,s^{2} + s + 1)} & 
\dfrac { s^{6} + 4\,s^{3} + s^{2} + 2\,s  + 2}
{(s^{2} + s + 2)^{2}\,(s^{3} + 3\,s^{2} + s + 1)} \\[3ex]
\dfrac {2\,s^{6} + 3\,s^{3} + 2\,s^{2} + s + 1 }
{(s^{2} + s + 2)^{2}\,(s^{3} + 3\,s^{2} + s + 1)} 
& \dfrac {
2\,  ( 3\,s^{6} + 2\,s^{3} + 3\,s^{2}  + s + 3 ) }
  {(s^{2} + s + 2)^{2}\,(s^{3} + 3\,s^{2} + s  + 1)}
\end{array}}
 \right)
\end{multline}
      with entries in  $\mathbb{Z}_5(s)$.
We shall proceed along the lines of Section 5 and Section 4.

(1.) \underline{Partial fraction decomposition of $T$}\\
Let
$p_1 = s^2+s+2$
and
$p_2 = s^{3} + 3\,s^{2} + s + 1$.
Then
          \[ 
               T = T_{p_1} + T_{p_2}
   \]
and 
      \[
T_{p_1}(s) =
\left(
\begin{array}{cc}
\dfrac {3\,s^{3} + 4\,s^{2} + s  }
{(s^{2} + s + 2)^{2}} 
            & \dfrac { 3\,s^{3} + 2\, s^{2} + 3\,s + 4 }
{(s^{2} + s + 2)^{2}} \\[3ex]
\dfrac { s + 3}{(s^{2} + s + 2)^{2}} &
         \dfrac {2\,s^{3} + 4\,s ^{2} + 3\,s}
{(s^{2} + s + 2)^{2}}
\end{array}
 \right)
\]
and
\[
T_{p_2}(s) =  
\left(
\begin{array}{cc}
\dfrac {4\,s^{2} + 4\,s  + 1} 
{s^{3} + 3\,s^{2} + s + 1} &
   \dfrac {3\,s^{2}   + 3\,s + 2 }
        {s^{3} + 3\,s^{2} + s + 1} \\[3ex]
\dfrac {2\,s^{2} + s + 2}
{s^{3} + 3\,s^{2} + s + 1} & 
 \dfrac {4\,s^{2} + 2\,s + 4}
{s^{3} + 3\,s^{2} + s + 1}
\end{array}
 \right) \; .
\]
\bigskip

\noi (2.) \underline{Realization of $T_{p_1}$}
\medskip

The Smith-McMillan form of $T_{p_1}$ is
\[
\Sigma  =  
 \diag \bigl( \dfrac{a_1}{p^2}, \dfrac{a_2}{p} \bigr)
 =
\left(
{\begin{array}{cc}
\dfrac {1}{(s^{2} + s + 2)^{2}} & 0 \\[2ex]
0 & \dfrac {(s + 1)\,(s^{3} + 3\,s^{2} + 4)}{s^{2} + s + 2}
\end{array}}
 \right)\; .
 \]
We have $T_{p_1} = U \Sigma V^T$ \,, and the unimodular
matrices $U$ and $V$ are given by
\[
U=  (u_1, u_2) = 
\left(
{\begin{array}{cc}
s\,(3\,s^{2} + 4\,s + 1)
                        & 4\,s^{2} + 3 \\
s + 3 & 3
\end{array}}
 \right) , 
                  \]
and 
\[ 
      V=  (  v_1, \, v_2 ) =
\left(
{\begin{array}{cr}
1 & 0 \\
2\,s^{5} + 4\,s^{4} + s^{3} + 4\,s^{2} + 2 & 1
\end{array}}
 \right) \, .
\]
Note that 
\[
   \dfrac{a_2} {p} = \dfrac{(s + 1) (s^3 + 3s^2 + 4) }
  {s^2 + s + 2}
\]
is not strictly proper. We calculate $\tilde{a}_2$ 
and obtain
\[
  \pi _{-}  \dfrac{a_2} {p} =  \dfrac{\tilde{ a}_2} {p}
= 
   \dfrac{3} {s^2 + s + 2} \; .
\]
Set
\begin{multline*}
    w_1 = u_1 \,  \dfrac{\tilde{ a}_1} {p^2} \, v_1 ^T = \\
\begin{pmatrix} s\,(3\, s^2 + 4s + 1) \\
           3 +s 
       \end{pmatrix}\,
 \dfrac{1} {(s^2 + s + 2)^2} \, 
  \begin{pmatrix} 1 \\ 
 2\,s^{5} + 4\,s^{4} + s^{3} + 4\,s^{2} + 2
                                       \end{pmatrix} ^T
\end{multline*}
and
     \[ 
           w_2 
                  =
       u_2\,  \dfrac{\tilde{ a}_2} {p} \, v_2 ^T 
 =
     \begin{pmatrix} 4\,s^{2} + 3 \\
                                        3 
   \end{pmatrix} \, 
                    \dfrac{3} {s^2 + s + 2} \, 
\begin{pmatrix} 0 \\ 1 
                         \end{pmatrix} ^T \; .
\]
Then 
        $T_{p_1} = \pi _{-} w_1 +   \pi _{-} w_2$.   
        

\sn
\noi
(2.1) 
 \underline{Realization of $\pi _{-} w_{1}$.    }\\
\sn
We set 
    \[
           h 
                 =
                     u_1 \, \tilde{a}_1 =
    \begin{pmatrix}
                       s\,(3\, s^2 + 4s + 1) \\
           3 +s 
       \end{pmatrix} 
\]
  and
  \[ 
         g = v_1 
                     =  \begin{pmatrix}
        1 \\
               2\,s^{5} + 4\,s^{4} + s^{3} + 4\,s^{2} + 2
 \end{pmatrix}. \; 
\]
Then $  h = h_0 + h_1 p $\,
with
\[
 h_0 = \begin{pmatrix} 4s + 3
 \\ s + 3
\end{pmatrix}, \;  
        h_1 = 
\begin{pmatrix}  3s + 1 \\ 0
\end{pmatrix}. \; 
\]
Similarly, \, $   g = g_0 + g_1 \, p + g_2 \, p^2$ \,
with
  \[
     g_0 = \begin{pmatrix} 1 \\ 2 
\end{pmatrix}, \;  
                      g_1 =
       \begin{pmatrix}    0 \\ s 
\end{pmatrix}, \; 
                    g_2 =
        \begin{pmatrix}    0 \\ 2\,s 
\end{pmatrix}. \; 
\]

This leads to 
\[
H_ 0= \left(
{\begin{array}{rr}
3 & 4 \\
3 & 1
\end{array}}
 \right) , \,   H_1= \left(
{\begin{array}{rr}
1 & 3 \\
0 & 0
\end{array}}
 \right) ,
\]
    and
 \[
          (H_0 \, | \, H_1) =
 \left(
{\begin{array}{rr | rr}
3 & 4 & 1 & 3 \\
3 & 1 & 0 & 0 
\end{array}} 
 \right) =  H .
\]
The matrix $M$ in \eqref{eqM} is given by
\[  
         M = 
 \left(
{\begin{array}{rr} 1 & 1 \\
                                 1 & 0
     \end{array}}
 \right).
\]
Hence
\[
G_0= \left(
{\begin{array}{rr}
0 & 1 \\
0 & 2
\end{array}}
 \right) , \,G_1 = \left(
{\begin{array}{rc}
0 & 0 \\
1 & -1
\end{array}}
 \right)
                  \]
such that
 \[
       \left( {\begin{array}{c}
 
                                 G_1 ^T  \vspace{.5mm}
\\ \hline   G_0 ^T 
             \end{array}}
 \right) 
         =
              \left(
{\begin{array}{rc}
0 & 1 \\
0 & -1 \\ \hline
0 & 0 \\
1 & 2
\end{array}}
 \right) 
               =  G \; .
\]
Finally, for $p = s^2 + s + 2$,  we have 
\[
      J(p^2) = 
                        \left(
{\begin{array}{rr  rr}
0 & 1 &  \phantom{0} &  \phantom{0} \\
3 & 4 & 1 & \phantom{0} \\ 
\phantom{0} & \phantom{0} & 0 & 1 \\
\phantom{0} & \phantom{0} & 3 & 4
\end{array}}
 \right) 
                = F \, .
\]

\bsk
(2.2)  \underline{ Realization of $\pi_{-}w_{2}$ }\\
\smallskip
Set 
\[ 
        h = 
              u_2 \, \tilde{a}_2 = 
 \begin{pmatrix} 
                   4\, s^2 + 3 \\
          3 
 \end{pmatrix} \times 3   = 
\begin{pmatrix}         2 \, s^2 + 4 \\
                 4 
\end{pmatrix}
\]
       and
\[
     g = v_2 = \binom{0}{1} \, .
\]
Then \, $ h = h_0 + h_1 \, p$ \,\, with
\[
       h_0 = \begin{pmatrix} 3\, s \\ 4 \end{pmatrix}  , \;
   h_1 =
            \begin{pmatrix} 2 \\ 0\end{pmatrix} \, .
 \] 
Hence
\[
   H_0 = 
 \left(
{\begin{array}{rr}
0 & 3 \\
4 & 0
\end{array}}
 \right)
               = H \, .
\]
From 
\[ 
    g_0 =        \left(
{\begin{array}{r}
0  \\
1
\end{array}}
 \right) 
                \]
and \eqref{eq14}  we obtain        
      \[
 G_0 = 
 \left(
{\begin{array}{rr}
0 & 0 \\
0 & 1
\end{array}}
 \right)      
                  = G     \; .
\]
The corresponding state space matrix is
\[
    J(p^1) = C(p) = 
\begin{pmatrix}
0 & 1 \\
3 & 4
\end{pmatrix}
           = F \, .
\]

\bsk
(3.) \underline{  Realization of $T_{p_2}$ }

\sn
The Smith-McMillan form of $T_{p_2}$ is
\[
{\Sigma } =  \left(
{\begin{array}{cr}
\dfrac {1}{s^{3} + 3\,s^{2} + s + 1} & 0 \\
0 & 0
\end{array}}
 \right) \; .
\]
The unimodular matrices $U$, $V$ in the decomposition
$ U \Sigma V^T = T_{p_2}$ are
\[
U=   (u_1, u_2) =
                    \left(
{\begin{array}{cc}
4\,s^{2} + 4\,s + 1  & s \\
 2\,s^{2} + s + 2 & 3\,s  + 1
\end{array}}
 \right) , \,      
              V= (v_1, v_2) =
  \left(
{\begin{array}{rr}
1 & 0 \\
2 & 1
\end{array}}
 \right) \, .
\]
Set 
\[ 
 p = s^3 + 3 \, s^2 + s + 1.
\]
Then
\[
T_{p_2} 
        =
  u_1 \, \frac{1}{p} \,  
                           v_1 ^T = 
     \left(
{\begin{array}{c}
               4\,s^{2} + 4\,s + 1  \\
 2\,s^{2} + s + 2
\end{array}}
 \right) \, \dfrac{1}{s^3 + 3s^2 + s + 1} 
                                           \begin{pmatrix} 1 \\ 0
                            \end{pmatrix} ^T \, .
\]
It is easy to see that one can obtain the minimal realization
of $T_{p_2}$ directly from Theorem \ref{theo4.1}. Note that 
 $T_{p_2}$ 
 is of the form \eqref{eq14b} 
with $ h = u_1$,  $   g= v_1$,
                   and $k=1$. 
Moreover, $\mathrm{deg}\, h < \mathrm{deg}\, p$ and
$\mathrm{deg}\, g < \mathrm{deg}\, p$ imply $H = H_0$ and
$G = G_0$.
Thus $h = h_0$ yields
\[
H_0= \left(
{\begin{array}{rrr}
1 & 4 & 4 \\
2 & 1 & 2
\end{array}}
 \right)
               = H \, .
\]
From 
\begin{equation} \label{eqMp}
               M = 
 \left( 
{\begin{array}{ccc}
                         1 & 3 & 1 \\
           3 & 1 & 0 \\
     1 & 0 & 0 
                        \end{array}} \right)
\end{equation}
and
  $g = g_0$ follows 
\[
G_0= \left(
{\begin{array}{rr}
0 & 0 \\
0 & 0 \\
1 & 2
\end{array}}
 \right)
 =G   \; .
\]
Finally, we have 
\[
        J(p^1) = C(p) = 
 \left(       {\begin{array}{rrr}
0 & 1 & 0 \\
0 & 0 & 1 \\
4 & 4 & 2
\end{array}}
 \right) 
              = F \, .
\]

\bsk
(4.) \underline{Realization of $T$}

\sn
Taking the direct sum of the realizations of 
$\pi _{-} w_1$, $\pi _{-} w_2$ and $T_{p_2}$ we obtain
\[         
F  =  \left(
{\begin{array}{rrrr rrrrr}
0 & 1 & 
 \phantom{0} & \phantom{0} \vline &  \phantom{0}
 & \phantom{0} & \phantom{0} & \phantom{0} & \phantom{0} \\
3 & 4 & 1 & \phantom{0} \vline & 
 \phantom{0} & \phantom{0} & \phantom{0} & 
\phantom{0} & \phantom{0} \\
0 & 0 & 0 & 1  \vline &  \phantom{0} & \phantom{0} & \phantom{0} & 
\phantom{0} & \phantom{0} \\
0 & 0 & 3 & 4 \vline& \phantom{0} & \phantom{0} & \phantom{0} & 
\phantom{0} & \phantom{0} \\
\cline{1-6} 
 \phantom{0} & \phantom{0} & \phantom{0} & \phantom{0} \vline &
  0 & 1  \vline&  \phantom{0} & \phantom{0} & \phantom{0} \\
 \phantom{0} & \phantom{0} & \phantom{0} & \phantom{0}  \vline &
  3 & 4  \vline &  \phantom{0} & \phantom{0} & \phantom{0} \\
\cline{5-9}
 \phantom{0} & \phantom{0} & \phantom{0} & 
\phantom{0} & \phantom{0} & \phantom{0} \vline &
0 & 1 & 0 \\
\phantom{0} & \phantom{0} & \phantom{0} & 
\phantom{0} & \phantom{0} & \phantom{0} \vline &
0 & 0 & 1 \\
\phantom{0} & \phantom{0} & \phantom{0} & 
\phantom{0} & \phantom{0} & \phantom{0} \vline &
4 & 4 & 2
\end{array}}
 \right) \, ,
  \]
\[
H= \left(
{\begin{array}{rrrr | rr |rrr}
3 & 4 & 1 & 3 & 0 & 3 & 1 & 4 & 4 \\
3 & 1 & 0 & 0 & 4 & 0 & 2 & 1 & 2
\end{array}}
 \right) \quad \mathrm{and} \quad
G= \left(
{   \begin{array}{rc}
0 & 1 \\
0 & -1 \\
0 & 0 \\
1 & 2 \\ \hline
0 & 0 \\
0 & 1 \\ \hline
0 & 0 \\
0 & 0 \\
1 & 2
            \end{array}   }
 \right) \; . 
                                              \]
Then the transfer matrix $T$ in \eqref{eqT} 
has a minimal realization $H(sI - F)^{-1} G = T(s)$
where  the matrices $F, G, H$ are the ones 
displayed above, and $F$ is in Jacobson normal form. 

\bigskip \medskip

\noindent {\bf{References}}

\bigskip


 \noindent
{\sc Ayres, F.}, JR., 1962,
 {\em Schaum's Outline of 
Theory and Problems of Matrices}
(New York: McGraw Hill).

\bigskip \noindent
{\sc Cohn, P.M.}, 1974, {\em Algebra, Vol.\,1}
 (London: Wiley).

\bigskip \noindent
{\sc Coppel, W.A.}, 1981,
Linear systems: Some algebraic aspects.
{\em Linear Algebra Appl.}, {\bf{40}}, 257--273.

\bigskip \noindent
{\sc Coppel, W.A.}, 1974, Matrices of rational functions. 
{\em Bull. Austral. Math. Soc.}, {\bf{11}}, 89--113.

\bigskip \noindent

{\sc Datta, K.B.}, 1980,
 Minimal realizations in companion forms.
{\em J.~Franklin Inst.},  {\bf{309}}, 103--123.

\bigskip \noindent
{\sc Della Dora, J.,} and {\sc Jung, F.}, 1996,
Resolvent and rational canonical forms of matrices.
{\em SIGSAM Bull.}, {\bf{30(117)}}, 4--10.

\bigskip \noindent
{\sc De Schutter, B.}, 2000, 
Minimal state-space realization in linear 
system theory: An overview.
{\em J.~Comput. Appl. Math.}, {\bf{121}}, 
331--354.

\bigskip \noindent
{\sc Eising, R.}, and {\sc Hautus,  M.L.J.}, 1981,
 Realization algorithms for systems over 
a principal ideal domain.
{\em Math. Syst. Theory}, {\bf{14}}, 353--366. 

\bigskip \noindent
{\sc Forney, G.D.,} JR., 1970,
Convolutional codes, I, Algebraic structure. 
 {\em{
IEEE Trans. Inform. Theory}}, {\bf{IT-16}},  720--738.

\bigskip \noindent
{\sc Gill, A}, 1966, 
{\em Linear Sequential Circuits} (New York: McGraw-Hill).

\bigskip \noindent
{\sc Ho, B.L.}, and {\sc Kalman, R.E.}, 1966,
Effective construction of linear state
variable models from input/output functions.
{\em Regelungstechnik}, {\bf{14}},
545--548.

\bigskip \noindent
{\sc Horn, R.A.}, and
{\sc Johnson}, CH.R., 1991, {\em Topics in Matrix Analysis}
 (Cambridge: Cambridge University Press).

 \bigskip \noindent
{\sc Jacobson, N.}, 1953,
 {\em Lectures in Abstract Algebra, Vol. II - 
Linear Algebra} (Princeton:  Van Nostrand).


 \bigskip \noindent
{\sc Kailath, Th.}, 1980, {\em Linear Systems} 
 (Englewood Cliffs: Prentice Hall).

 \bigskip \noindent
{\sc Kalman, R.E.}, 1965,
Irreducible realizations  and the degree of a
rational matrix. 
{\em J.\ Soc.\ Ind.\ Appl.\ Math.},
{\bf{13}}, 520--544.

 \bigskip \noindent
{\sc Kalman, R.E}, and {\sc Declaris, N.}, 1970,
 editors,  
{\em Aspects of Network and System
Theory} (New York: Holt, Reinhart \& Winston).

\bigskip \noindent 
{\sc Krull, W.}, 1921,
 {\em \"Uber Begleitmatrizen und 
Elementarteilertheorie},
Dissertation, Freiburg, 
 Gesammelte Abhandlungen, Bd. 1, 
edited by P. Ribenboim, 1999 (Berlin: de Gruyter), pp.\,55--95.

\bigskip \noindent
{\sc Massey,  J.L.}, and {\sc Sain, M.K.}, 1967,
Codes automata and continuous
systems:
Explicit interconnections. 
{\em IEEE Trans.\ Automat.\ Control},{\bf{\ AC--12}},
644--650.

\bigskip \noindent
{\sc Mayne, D.Q.}, 1968,
 Computational procedure for the 
minimum realization
of transfer function matrices.
{\em Proc.\ IEE}, {\bf{115}}, 1363--1368.

\bigskip \noindent
{\sc Montes, C.G.}, 1976,
 Minimal realization of a transfer 
function matrix. 
 {\em IEEE Trans.\ Automat.\ Control},
 {\bf{AC--21}}, 399--401.

\bigskip \noindent
{\sc Newcomb, R.W}, 1967,
 {\em Active Integrated Circuit Synthesis} 
 (Englewood Cliffs: Prentice Hall). 

\bigskip \noindent
{\sc Pace, I.S.} and {\sc Barnett, St.}, 1974,
Efficient algorithms for linear
system calculations, II: Minimal realizations. 
{\em Int.\ J.~Systems Sci.}, {\bf{5}},
413--424.

\bigskip \noindent
{\sc Rosenbrock, H.H}, 1970,
{\em State-Space and Multivariable Theory}
(New York: Wiley).

\bigskip \noindent
{\sc Rosenthal, J.}, 2001,
 Connections between linear systems and 
convolutional codes.
{\em Codes, Systems and 
Graphical Models}. IMA Vol. 123, edited by 
B.~Marcus and J.~Rosenthal (New York: Springer-Verlag),
pp.~39--66.

\bigskip \noindent
{\sc Rosenthal, J., Schumacher, J.M.,} and 
{\sc York, E.V.}, 1996,
On behaviors and convolutional codes.
{\em IEEE Trans. Inform. Theory}, {\bf{IT-42}}, 
  1881--1991.

\bigskip \noindent
{\sc Rosenthal, J.}, and {\sc York, E.V.}, 1999,
 BCH convolutional codes.
 {\em IEEE Trans. Inform. Theory}, {\bf{IT-45}}, 
1833--1844.

\bigskip \noindent
{\sc Silverman, L.M.}, 1971,
Realization of linear dynamical systems.
{\em IEEE Trans.\ Automat. Control}, {\bf{AC-16}},
 554--567.

\end{document}